\numberwithin{equation}{section}
\newcounter{AbcT}
\newcommand{\nc}{\newcommand}
\nc{\meet}{\wedge}
\nc{\op}{\operatorname}\nc{\FP}{\op{FP}}\nc{\FS}{\op{FS}}\nc{\FPhat}{\widehat{\op{FP}}}
\newtheorem {Theorem}    {Theorem}[section]
\newtheorem {Lemma}      [Theorem]    {Lemma}
\newtheorem {Corollary}   [Theorem] {Corollary}
\theoremstyle{remark}
\nc{\N}{\mathbb{N}}
\newcounter{DM@bibnum}
\nc{\set}[2]{\{#1 \,:\, #2\}}
\def\NSL_2{{\mathcal N SL_2}}
\def\phi{\varphi}
\def\hbar{\bar h}
\begin{document}

\title{Nonempty intersection of longest paths in $2K_2$-free graphs}
\author{Gili Golan and Songling Shan\\
{ \small \smallskip Vanderbilt University, Nashville, TN\,37240}}
\date{}

\maketitle
\abstract{In 1966, Gallai asked {\it whether all longest paths in a connected graph share a common vertex}.
Counterexamples indicate that this is not true in general. However, Gallai's question is positive for certain well-known classes of
connected graphs, such as split graphs, interval graphs, circular arc graphs, outerplanar graphs,
and series-parallel graphs. A graph is $2K_2$-free if it does not contain two independent edges as an induced subgraph. In this paper, we show that in nonempty $2K_2$-free graphs, every vertex of maximum degree is common to all longest paths.  Our result implies that all longest paths in a nonempty $2K_2$-free graph have a nonempty intersection. In particular, it gives a new proof for the result on split graphs, as split graphs are $2K_2$-free.
\vspace{2mm}

\emph{Keywords}: $2K_2$-free; Longest paths; Dominating paths
\vspace{2mm}

\section{Introduction}

All graphs considered in this paper are finite and simple. A path $P$ in a graph $G$ is a \emph{longest path} in $G$ if there is no path in $G$ strictly longer than $P$. The study of intersections of longest paths in  graphs has a long history. It is well-known that in a connected
graph, any two longest paths have a vertex in common~\cite{ore}. It
is also well-known that a family of pairwise intersecting subtrees of a tree
has nonempty intersection.  This motivated Gallai in 1966 to ask whether all longest paths in a connected graph have a vertex in common. In 1974, Walther~\cite{Walther69} gave a counterexample to Gallai's problem. Walther's example was a graph with 25 vertices. A counterexample with 12 vertices was constructed by Walther and Voss~\cite{Walther74} and independently by Zamfirescu~\cite{Zamfirescu76} (see Figure~\ref{fig:wzamf}).
Brinkmann and van Cleemput \cite{brink} showed that for graphs with less than 12 vertices, Gallai's problem has a positive solution.

\begin{figure}[!htb]
	\centering
	\includegraphics[scale=0.1]{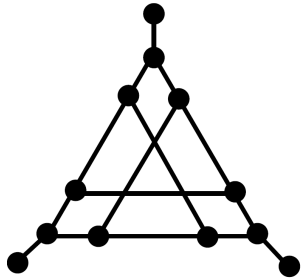}
	\caption{The smallest counterexample to Gallai's question.}		
	\label{fig:wzamf}
\end{figure}

Counterexamples to the problem were found for 2- and 3-connected planar graphs \cite{L-P-2-3-connected-planar-counterexample} as well as for non-planar graphs \cite{L-P-connected-planar-counterexample}.
Every hypo-traceable graph (i.e., a non-Hamiltonian graph where all vertex-deleted subgraphs are Hamiltonian) is clearly a counterexample. As there are infinitely many hypo-traceable graphs\,(see Thomassen~\cite{Thomassen76}),
 there are infinitely many counterexamples to the problem. 

Further research on intersection of longest paths in graphs is carried out in two main directions. The first direction is concerned with the intersection of a fixed number of longest paths. The most interesting open problem in this area is whether any three longest paths in a connected graph share a common vertex. The question was first asked by Zamfirescu~\cite{Voss91}. It is also mentioned in \cite{BCC95,HarrisHM2008,West, Zamfirescu01}. Progress in this direction was made by de~Rezende et al.~\cite{deRezendeFMW13}, who proved that if all nontrivial blocks of a connected graph are Hamiltonian, then any three longest paths in the graph share a vertex. Skupie\'n~\cite{Skupien96} showed that for every \(p \geq 7\), there exists a connected graph such that~\(p\) longest paths have no common vertex and every~\(p-1\) longest paths have a common vertex.

The second direction considers Gallai's problem for particular classes of graphs. Klav\v{z}ar and Petkov\v{s}ek~\cite{KlavzarP90} gave an affirmative answer to Gallai's question for split graphs and for cacti (a graph is a \emph{cactus} if and only if each of its blocks is a simple cycle or a vertex or a single edge).
An affirmative answer was given for the class of circular-arc graphs by Balister et al.~\cite{BalisterGLS04} (see also Joos~\cite{Joos15} for a correction of a gap in the proof from \cite{BalisterGLS04}). A positive answer for connected outerplanar graphs and 2-trees was given by de Rezende et al.~\cite{Eurocomb11}. Recently, the second author with Chen et al.~\cite{Chen2017287} extended this result, giving a positive solution to Gallai's problem for the class of connected series-parallel graphs.

In this paper, we investigate the intersection of all longest paths in connected $2K_2$-free graphs.
A graph is {\it $2K_2$-free} if it contains no two independent edges as an induced subgraph. The class of $2K_2$-free graphs is well studied, for instance, see~\cite{
2k2-tough, CHUNG1990129, MR845138, MR2279069,2k21, 1412.0514,MR1172684}.
It is a superclass of the class of {\it split graphs\/},
where vertices can be partitioned into a clique and an independent set.
One can also easily check that every {\it cochordal\/} graph (i.e., a graph that is the complement of a
chordal graph) is $2K_2$-free and so the class of $2K_2$-free graphs is
at least as rich as the class of chordal graphs. We give a positive solution to Gallai's problem for the class of $2K_2$-free graphs. In fact, we prove the following stronger result. Recall that a graph is {\it nonempty} if it contains at least one edge. 

\begin{Theorem}\label{thm1}
In a nonempty $2K_2$-free graph, every vertex of maximum degree is common to all longest paths.
\end{Theorem}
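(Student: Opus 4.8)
The plan is to argue by contradiction: fix a vertex $v$ of maximum degree $\Delta$ and a longest path $P=x_1x_2\cdots x_k$, and suppose $v\notin V(P)$. I will manufacture a path strictly longer than $P$, contradicting the choice of $P$. The whole argument rests on converting the $2K_2$-free hypothesis into an abundance of chords of $P$, and then using those chords to reroute $P$ into a path that contains $v$.

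First I would record two structural facts. (i) \emph{Every edge of $G$ has an endpoint on $P$.} Since $P$ is longest, no neighbor of an endpoint $x_1$ or $x_k$ lies off $P$; so if some edge $ab$ had $a,b\notin V(P)$, then applying $2K_2$-freeness to $ab$ and the end-edge $x_1x_2$ (four distinct vertices, with $x_1\not\sim a$ and $x_1\not\sim b$) forces, say, $a\sim x_2$, and then $b\,a\,x_2\,x_3\cdots x_k$ is a path on $k+1$ vertices, a contradiction. In particular $N(v)\subseteq V(P)$. (ii) \emph{No two neighbors of $v$ are consecutive on $P$, and $x_1,x_k\notin N(v)$}: otherwise inserting $v$ between two consecutive neighbors, or prepending/appending it at an endpoint, already lengthens $P$. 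Hence the $\Delta$ neighbors of $v$ lie among $x_2,\dots,x_{k-1}$ with pairwise gaps at least two, so $k\ge 2\Delta+1$.

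Next I would extract the chords. Writing $a_1<\cdots<a_\Delta$ for the indices with $x_{a_j}\in N(v)$, I apply $2K_2$-freeness to each edge $vx_{a_j}$ against each path-edge $x_ix_{i+1}$ both of whose endpoints avoid $N(v)$ (so that $v$ is adjacent to neither $x_i$ nor $x_{i+1}$): this forces $x_{a_j}\sim x_i$ or $x_{a_j}\sim x_{i+1}$. Thus each neighbor of $v$ is adjacent to at least one endpoint of every path-edge lying in the ``non-neighbor'' part of $P$; in particular $x_{a_1}$ dominates the prefix $x_1\cdots x_{a_1-1}$, $x_{a_\Delta}$ dominates the suffix $x_{a_\Delta+1}\cdots x_k$, and every gap between two consecutive neighbors is dominated from both sides. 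These chords are exactly what is needed to perform P\'osa-type rotations that slide an endpoint of a Hamiltonian path of $G[V(P)]$ onto a neighbor of $v$.

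The decisive and hardest step is the rerouting itself: I aim to produce a Hamiltonian path of $G[V(P)]$ one of whose endpoints lies in $N(v)$, for then prepending $v$ yields a path on $V(P)\cup\{v\}$ of length $k$, strictly longer than $P$. Starting from $P$ and rotating an end along the chords above, the difficulty is that a rotation which frees a neighbor of $v$ tends to strand the segment of $P$ lying between two consecutive neighbors, and reattaching such a middle segment is where the argument is delicate. I expect to handle this by processing the segments between consecutive neighbors one at a time, using the two-sided domination of each gap to splice the segment back in while keeping a neighbor of $v$ available as a free end, and using the maximality of $\deg v$ (equivalently the bound $k\ge 2\Delta+1$, which guarantees enough room and enough chords) to drive the process to completion. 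The genuinely degenerate cases---$\Delta=1$, and short paths in which the extremal neighbors coincide with or sit adjacent to the endpoints---I would dispose of directly by the same insertion and rotation moves.
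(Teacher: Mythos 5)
Your preliminary steps are sound and coincide with the paper's Lemmas 2.1 and 2.2(1,2): every longest path in a $2K_2$-free graph is dominating (so $N(v)\subseteq V(P)$), the endpoints of $P$ are not neighbors of $v$, no two neighbors of $v$ are consecutive on $P$, hence $k\ge 2\Delta+1$; the chord extraction is also a correct use of $2K_2$-freeness. But the step you yourself call decisive --- rerouting $P$ into a Hamiltonian path of $G[V(P)]$ with an endpoint in $N(v)$ --- is never carried out: you describe what you ``aim to produce'' and how you ``expect to handle'' the stranded middle segments, but you give no invariant, no induction, and no termination argument. This is a genuine gap, and not an innocuous one, because the adjacencies such rotations would need are exactly the ones the longest-path hypothesis forbids. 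Indeed, if $x_a\in N(v)$ then $x_1\not\sim x_{a+1}$ (otherwise $v\,x_a x_{a-1}\cdots x_1 x_{a+1}\cdots x_k$ is longer than $P$), and if $x_a,x_b\in N(v)$ are distinct then $x_{a+1}\not\sim x_{b+1}$ (otherwise $x_1\cdots x_a\,v\,x_b x_{b-1}\cdots x_{a+1}x_{b+1}\cdots x_k$ is longer). So the set $\{x_1\}\cup\{x_{a+1}: x_a\in N(v)\}$ of natural splice points is \emph{independent}, while your ``gap domination'' chords only tell you that each neighbor of $v$ hits \emph{some} endpoint of each gap edge, with no control over which; nothing in your sketch shows these ingredients suffice to reattach a stranded segment while keeping a neighbor of $v$ free as an endpoint.

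The paper turns this obstruction into the proof instead of fighting it. It takes $S=\{x_1\}\cup\{x_{a+1}: x_a\in N(v)\}$, which is independent by precisely the two non-adjacency facts above, sets $T=V(P)-S$, and exhibits $\Delta+1$ pairwise independent edges of $E(S,T)$ when $k\ge 2\Delta+2$ (a pigeonhole argument), or $\Delta$ such edges together with $T=N(v)$ when $k=2\Delta+1$. A lemma of Chung et al.\ for $2K_2$-free graphs (the paper's Lemma 2.3: for $S$ independent there is $y\in T$ whose neighborhood meets every edge of $E(S,T)$) then yields a vertex $y$ with $d(y)\ge\Delta+1$, contradicting maximality of degree --- no longer path is ever constructed. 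If you want to salvage your argument, the lesson is that the contradiction to aim for here is a degree violation, not a longer path; as written, your proposal is an unfinished plan whose hardest step remains open.
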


As a $2K_2$-free graph contains at most one 
component which contains an edge, we replaced the ``connectivity'' assumption 
by the ``non-emptiness'' assumption. 
 In particular, we get an alternative proof for Klav\v{z}ar and Petkov\v{s}ek's result for split graphs ~\cite{KlavzarP90}.

\begin{Corollary}\label{thm2}
If $G$ is a nonempty  split graph or cochordal graph,
then every vertex of maximum degree is common to all longest paths.
\end{Corollary}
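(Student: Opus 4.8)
The plan is to argue by contradiction: assume $v$ is a vertex of maximum degree $\Delta$ that is missing from some longest path $P=p_1p_2\cdots p_k$, and produce a path strictly longer than $P$. First I would dispose of the degenerate case $\Delta\le 1$: a nonempty $2K_2$-free graph with maximum degree $1$ has exactly one edge (two independent edges would be an induced $2K_2$), so its unique longest path is that edge and the statement is clear. Hence I may assume $\Delta\ge 2$ and $k\ge 3$. I would then record the standard facts that come for free from $P$ being longest: $N(p_1)\cup N(p_k)\subseteq V(P)$ (otherwise $P$ extends at an endpoint), so in particular $v\not\sim p_1,p_k$; $v$ is not adjacent to two consecutive vertices $p_i,p_{i+1}$ of $P$ (otherwise $p_1\cdots p_i\,v\,p_{i+1}\cdots p_k$ is longer); and $p_1\not\sim p_k$ (otherwise $V(P)$ spans a cycle, and entering it from a neighbor of $v$ yields a path on $|V(P)|+1$ vertices).

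The heart of the argument, and the place where $2K_2$-freeness does the real work, is the claim that $v$ has no neighbor off $P$. Suppose $a\notin V(P)$ with $a\sim v$. The edges $va$ and $p_1p_2$ meet in no vertex, so $2K_2$-freeness forces an edge among $\{vp_1,vp_2,ap_1,ap_2\}$. Now $v\not\sim p_1$, and $a\not\sim p_1$ since otherwise $a\,p_1\,p_2\cdots p_k$ is longer than $P$; hence $v\sim p_2$ or $a\sim p_2$. If $v\sim p_2$, then $a\,v\,p_2\,p_3\cdots p_k$ is a path on $k+1$ vertices; if instead $a\sim p_2$, then $p_k\,p_{k-1}\cdots p_2\,a\,v$ is a path on $k+1$ vertices. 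Either way we contradict the maximality of $P$. Thus every neighbor of $v$ lies on $P$.

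It remains to handle the case where $N(v)\subseteq V(P)$; by the above these neighbors occupy pairwise non-consecutive positions in $\{2,\dots,k-1\}$. My plan here is to manufacture a strictly longer path by producing a Hamiltonian path of the subgraph $G[V(P)]$ whose vertex set is $V(P)$ but one of whose endpoints is a neighbor of $v$: appending $v$ to such an endpoint gives a path on $k+1$ vertices. To find it I would run the usual rotation (P\'osa) arguments on $P$, feeding in the chords that $2K_2$-freeness forces. Concretely, for a neighbor $p_i$ of $v$ with $i\ge 4$, comparing $vp_i$ with $p_1p_2$ forces a chord $p_1p_i$ or $p_2p_i$; and a chord of the form $p_1p_{i+1}$ from an endpoint to the successor of a neighbor of $v$ is already fatal, since $v\,p_i\,p_{i-1}\cdots p_1\,p_{i+1}\cdots p_k$ would then have $k+1$ vertices. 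Tracking the set of endpoints reachable by rotations and using that none of them can be a neighbor of $v$ (else append $v$), I expect to force either such a fatal chord or a vertex whose degree exceeds $\Delta$.

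The main obstacle is precisely this last case, $N(v)\subseteq V(P)$. The clean $2K_2$ move that resolved off-path neighbors is unavailable, because every neighbor of $v$ on $P$ supplies a ``bridging'' chord that prevents the relevant pairs of edges from being induced copies of $2K_2$; consequently the forced chords tend to land on the neighbors of $v$ themselves rather than on their successors, where they would be useful. I therefore anticipate that the real work is a careful rotation/counting analysis of $G[V(P)]$ --- in effect showing that a longest path of a $2K_2$-free graph cannot leave a maximum-degree vertex trapped with all its neighbors interior and non-consecutive --- and this is the step I would expect to occupy the bulk of the proof.
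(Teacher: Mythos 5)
Your overall route---treat split and cochordal graphs as special cases of $2K_2$-free graphs and prove the statement in that generality---is exactly the paper's route: Corollary~\ref{thm2} is deduced there by observing that both classes are $2K_2$-free (in a split graph two independent edges would each need an endpoint in the clique, forcing a connecting edge; an induced $2K_2$ in $G$ is an induced $C_4$ in $\overline{G}$, contradicting chordality of the complement) and then invoking Theorem~\ref{thm1}; you should at least record this observation, which you use silently. So what you are really attempting is a blind proof of Theorem~\ref{thm1} itself. Your preliminary steps are correct and correspond to the paper's lemmas: the fact that $v$ has no neighbor off $P$ (via the edge pair $va$, $p_1p_2$) is a special case of Lemma~\ref{dom}, and the endpoint/non-consecutiveness facts are Lemma~\ref{4}(1),(2).

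The genuine gap is the case you yourself flag as ``the main obstacle'': $N(v)\subseteq V(P)$ with all neighbors interior and pairwise non-consecutive. For this case you offer only a plan (``I expect to force\dots'', ``I anticipate\dots''), and the plan as stated already has a hole: comparing $vp_i$ with $p_1p_2$ does \emph{not} force a chord $p_1p_i$ or $p_2p_i$, since the pair may fail to be an induced $2K_2$ because $v\sim p_2$---precisely the ``bridging chord'' phenomenon you describe, which undercuts the rotation scheme before it starts. The paper closes this case not by rotations but by a counting argument that crucially uses the maximum-degree hypothesis. With $k=d(x)=\Delta(G)$, it forms $S=\{v_0\}\cup\{v_{a+1} \mid v_a\in N(x)\}$, which is independent exactly because the chords $v_0v_{a+1}$ and $v_{a+1}v_{b+1}$ are ``fatal'' in your sense (Lemma~\ref{4}(3),(4)); it then applies Lemma~\ref{bi} (due to Chung): in a $2K_2$-free graph, for any independent set $S$ and any $T\subseteq V(G)-S$ there is $y\in T$ whose neighborhood meets every edge of $E(S,T)$. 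Since $E(S,T)$ contains the $k$ independent edges $v_av_{a+1}$, $v_a\in N(x)$---and $k+1$ independent edges when $|V(P)|\ge 2k+2$, by a pigeonhole argument on the subpaths between neighbors of $x$---such a $y$ has degree at least $k+1$ (in the tight case $|V(P)|=2k+1$ one checks $T=N(x)$, so $x$ itself supplies the $(k+1)$-st neighbor of $y$), contradicting $\Delta(G)=k$. Without this counting step, or a fully worked-out substitute for it, your argument does not establish the theorem, and hence does not establish the corollary.
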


\vskip .2cm

For a graph $G$ we will denote by $V(G)$ and $E(G)$ the vertex set and edge
set of $G$, respectively. If $uv\in E(G)$, we write $u\sim v$ to denote the adjacency of $u$ and $v$.
For two disjoint subsets $S, T\subseteq V(G)$, we denote by $E_G(S,T)$ the set of edges of $G$ with one end in $S$ and the other in $T$. If $u\in V(G)$ we denote by $N_G(u)$ the set of neighbors of $u$ in $G$. If $G$ is clear from the context, we omit the subscript $G$ and write $E(S,T)$ and $N(u)$.

\section{Proof of Theorem~\ref{thm1}}

In this section we prove Theorem~\ref{thm1}.
We will need the following three lemmas. A path $P$
in a graph $G$ is {\it dominating} if $G-V(P)$
is edgeless.

\begin{Lemma}\label{dom}
Let $G$ be a  $2K_2$-free graph. Then every longest path in $G$ is dominating.
\end{Lemma}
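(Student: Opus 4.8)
The plan is to argue by contradiction: assume a longest path $P$ fails to dominate $G$, extract an edge off $P$, and use the $2K_2$-freeness to reroute $P$ into a strictly longer path. Write $P = v_0 v_1 \cdots v_k$ for a longest path in $G$, and suppose toward a contradiction that $G - V(P)$ contains an edge $xy$. Since $G$ then has an edge, $P$ has length $k \geq 1$, so its first edge $v_0 v_1$ exists; moreover $x, y, v_0, v_1$ are four distinct vertices, because $x, y \notin V(P)$ while $v_0, v_1 \in V(P)$.

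First I would record the standard endpoint property of longest paths: every neighbor of the endpoint $v_0$ lies on $P$. Indeed, if $v_0$ had a neighbor $w \notin V(P)$, then $w\, v_0 v_1 \cdots v_k$ would be a path strictly longer than $P$, contradicting maximality. In particular $v_0 \not\sim x$ and $v_0 \not\sim y$.

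The key step is to apply $2K_2$-freeness to the two disjoint edges $xy$ and $v_0 v_1$. Since $G$ has no induced $2K_2$, at least one of the four cross edges $xv_0, yv_0, xv_1, yv_1$ must be present. By the previous paragraph $xv_0$ and $yv_0$ are absent, so one of $xv_1, yv_1$ is an edge; without loss of generality $x \sim v_1$. Now consider the walk $y\, x\, v_1 v_2 \cdots v_k$: its consecutive pairs $yx$, $xv_1$, and each $v_i v_{i+1}$ are edges, and all its vertices are distinct since $x, y \notin V(P)$ and $v_1, \ldots, v_k$ are distinct vertices of $P$. Thus it is a path with $k+1$ edges, strictly longer than $P$ — contradicting the choice of $P$. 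Hence $G - V(P)$ is edgeless, i.e. $P$ is dominating.

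I expect the only delicate point to be the bookkeeping guaranteeing that the rerouted walk $y\, x\, v_1 \cdots v_k$ is genuinely a path (distinct vertices) and genuinely longer; everything else is a direct application of the hypothesis to one carefully chosen pair of edges. The decisive choice is to pair the off-path edge $xy$ with the \emph{first} edge $v_0 v_1$ of $P$ rather than an interior edge: this is exactly what lets the endpoint property eliminate the two candidate edges at $v_0$ and force an attachment at $v_1$, which can then be extended through $x$ and on to $y$.
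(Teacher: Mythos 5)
Your proof is correct and follows essentially the same route as the paper: pair the off-path edge with the first edge $v_0v_1$ of $P$, invoke $2K_2$-freeness to obtain a cross edge, and reroute to a strictly longer path. The only cosmetic difference is that you rule out cross edges at $v_0$ in advance via the endpoint property of longest paths, whereas the paper handles that case directly by extending the path through $v_0$; both yield the same contradiction.
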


\begin{proof}
Let $P$ be a longest path in $G$. Assume by contradiction that $P$ is not dominating. Then there exists an edge $uv\in E(G)$ such that $u,v\notin V(P)$. Let $v_0v_1$ be the first edge of the path $P$. Since $G$ is $2K_2$-free, there must be an edge $e'$ in $G$ which connects the edge $uv$ to the edge $v_0v_1$. Without loss of generality, we can assume that $e'$ connects $v$ to either $v_0$ or $v_1$. If $e'=vv_0$ then $uvv_0v_1\cdots v_{\ell}$ is a path in $G$ longer than $P$. If $e'=vv_1$ then $uvv_1\cdots v_{\ell}$ is a path in $G$ longer than $P$.
\end{proof}

\begin{Lemma}\label{4}
Let $G$ be a  $2K_2$-free graph. Let $P=v_0v_1\cdots v_{\ell}$ be a longest path in $G$ and let $x$ be a vertex of $G$ which does not belong to $P$. Then the following assertions hold.
\begin{enumerate}
\item[(1)] The vertex $x$ is not adjacent to the endpoints $v_0$ and $v_{\ell}$ of $P$.
\item[(2)] The vertex $x$ does not have two neighbors which are consecutive vertices $v_i,v_{i+1}$ on $P$.
\item[(3)] If $v_a$ is a neighbor of $x$ then $v_0$ is not adjacent to $v_{a+1}$.
\item[(4)] If $v_a$ and $v_b$ are distinct neighbors of $x$ then $v_{a+1}$ is not adjacent to $v_{b+1}$.
\end{enumerate}
\end{Lemma}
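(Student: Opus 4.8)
The plan is to prove all four assertions by the same device: assume the forbidden configuration occurs, and then exhibit a path that visits every vertex of $P$ together with $x$. Such a path has $\ell+2$ vertices, hence is strictly longer than $P=v_0v_1\cdots v_\ell$, contradicting the choice of $P$ as a longest path. So the work is entirely in writing down, in each case, an explicit reordering of $v_0,\ldots,v_\ell,x$ that is a genuine path, i.e. consecutive terms are adjacent and no vertex is repeated. Throughout I would use part~(1) to guarantee that any neighbor $v_a$ of $x$ is an interior vertex of $P$ (so that $1\le a\le \ell-1$ and the successor $v_{a+1}$ exists), which makes the later constructions legitimate.

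First I would dispose of (1) and (2), which need no rotation. For (1), if $x\sim v_0$ then $x\,v_0v_1\cdots v_\ell$ is a path with one more vertex than $P$; the case $x\sim v_\ell$ is symmetric. For (2), if $x\sim v_i$ and $x\sim v_{i+1}$, then replacing the edge $v_iv_{i+1}$ by the detour $v_i\,x\,v_{i+1}$ yields the longer path $v_0\cdots v_i\,x\,v_{i+1}\cdots v_\ell$. Both conclusions contradict the maximality of $P$.

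Next I would handle (3) and (4), which use the standard ``rotation/crossover'' trick. For (3), suppose $v_a\sim x$ and $v_0\sim v_{a+1}$; by (1) we have $a\ge 1$, so $v_{a+1}\ne v_0$, and $a\le \ell-1$, so $v_{a+1}$ exists. I would form
\[
P' \;=\; x\,v_a v_{a-1}\cdots v_1 v_0\, v_{a+1} v_{a+2}\cdots v_\ell,
\]
which walks from $x$ back to $v_0$ along $P$ reversed, crosses the edge $v_0v_{a+1}$, and finishes along $P$ forward. For (4), suppose $v_a$ and $v_b$ with (say) $a<b$ are both neighbors of $x$ and $v_{a+1}\sim v_{b+1}$; again (1) gives $1\le a<b\le\ell-1$. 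Here I would take
\[
P' \;=\; v_0 v_1\cdots v_a\, x\, v_b v_{b-1}\cdots v_{a+1}\, v_{b+1} v_{b+2}\cdots v_\ell,
\]
which runs up to $v_a$, detours through $x$ to $v_b$, returns along $P$ reversed to $v_{a+1}$, then crosses the edge $v_{a+1}v_{b+1}$ and finishes forward to $v_\ell$. In each case $P'$ contains $x$ and all of $v_0,\ldots,v_\ell$.

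The only real obstacle is the routine verification that each $P'$ is actually a path, and I would spell this out. For adjacencies: all the steps along $P$ (forward or reversed) are edges of $G$, the step $x v_a$ (and $x v_b$) is an edge by the hypothesis that $v_a,v_b$ are neighbors of $x$, and the single crossover step ($v_0v_{a+1}$ in (3), $v_{a+1}v_{b+1}$ in (4)) is an edge by the assumed configuration. For distinctness: the index blocks used in each construction partition $\{0,1,\ldots,\ell\}$, so no vertex of $P$ is repeated, and $x\notin V(P)$ by hypothesis. I would also check the degenerate index cases, e.g. in (4) when $b=a+1$ the reversed block $v_b\cdots v_{a+1}$ collapses to the single vertex $v_{a+1}=v_b$ (a situation already excluded by (2)), and when $b=\ell-1$ the final block $v_{b+1}\cdots v_\ell$ is the single vertex $v_\ell$; in all such cases $P'$ remains a valid path of $\ell+2$ vertices. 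Each contradiction with the maximality of $P$ then completes the respective assertion.
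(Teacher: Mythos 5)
Your proposal is correct and follows essentially the same argument as the paper: the crossover paths you construct for (3) and (4) are exactly the paths $Q$ used in the paper's proof, and your explicit constructions for (1) and (2) are precisely the modifications the paper leaves implicit with ``one can modify the path $P$ to get a longer path.'' The additional verification of adjacency, distinctness, and degenerate index cases is careful but does not change the substance of the argument.
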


\begin{proof}
If (1) or (2) does not hold then one can modify the path $P$ to get a longer path in $G$.
To prove assertion (3), assume by contradiction that $v_0\sim v_{a+1}$. Then
$$Q=xv_av_{a-1}\cdots v_1v_0v_{a+1}v_{a+2}\cdots v_{\ell}$$ is a path in $G$
 which contains all the vertices of $P$ and the vertex $x$, in contradiction to $P$ being a longest path in $G$.

For part (4), assume that $v_{a+1}\sim v_{b+1}$ and that $a<b$. Then $$Q=v_0v_1\cdots v_axv_bv_{b-1}\cdots v_{a+1}v_{b+1}\cdots v_{\ell}$$ is a path in $G$ longer than $P$.
\end{proof}

The following lemma was first proved in~\cite[Theorem 1]{CHUNG1990129}. To make the paper self-contained, we recall the proof.

\begin{Lemma}\label{bi}
Let $G$ be a $2K_2$-free graph and let $S\subseteq V(G)$ be an independent set. Let $T\subseteq V(G)-S$. Then there exists $y\in T$ such that $N(y)$ meets all edges in $E(S,T)$.
\end{Lemma}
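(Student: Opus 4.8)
The plan is to select $y \in T$ by an extremal criterion and then exploit the $2K_2$-free hypothesis on a carefully chosen pair of disjoint edges. First I would dispose of the degenerate case: if $E(S,T) = \emptyset$, then every edge of $E(S,T)$ is met vacuously, so any $y \in T$ works. Hence assume $E(S,T) \neq \emptyset$, and choose $y \in T$ maximizing $|N(y) \cap S|$; since some vertex of $T$ has a neighbor in $S$, this maximum is at least $1$, so $N(y) \cap S \neq \emptyset$. I claim this $y$ meets every edge of $E(S,T)$.

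Next I would argue by contradiction. Suppose some edge $st \in E(S,T)$ with $s \in S$ and $t \in T$ is not met by $N(y)$, i.e. $y \not\sim s$ and $y \not\sim t$. Fix any $s_0 \in N(y) \cap S$, so that $ys_0$ is an edge with $y \in T$ and $s_0 \in S$. The two edges $ys_0$ and $st$ are vertex-disjoint: $y \neq s$ and $s_0 \neq t$ because $S$ and $T$ are disjoint, while $y \neq t$ and $s_0 \neq s$ because otherwise $y$ would be adjacent to $s$. Since $G$ is $2K_2$-free, one of the four cross-edges $ys$, $yt$, $s_0 s$, $s_0 t$ must be present. The edges $ys$ and $yt$ are excluded by assumption, and $s_0 s$ is excluded because $S$ is independent; therefore $s_0 \sim t$.

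Because $s_0$ was an arbitrary element of $N(y) \cap S$, this shows $t$ is adjacent to every vertex of $N(y) \cap S$, i.e. $N(y) \cap S \subseteq N(t) \cap S$. On the other hand $s \in N(t) \cap S$ (as $st$ is an edge) while $s \notin N(y)$, so the inclusion is strict and $|N(t) \cap S| > |N(y) \cap S|$ with $t \in T$, contradicting the maximality of $y$. The main point to get right is the extremal choice together with the observation that the $2K_2$-free condition does not merely force $t$ adjacent to a single neighbor of $y$ but, running the argument over all $s_0 \in N(y) \cap S$, forces $t$ to dominate the entire set $N(y) \cap S$ and strictly more; everything else is a routine disjointness check.
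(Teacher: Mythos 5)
Your proof is correct and is essentially the same argument as the paper's: you choose $y\in T$ maximizing $|N(y)\cap S|$ (the paper phrases this as maximum degree in the bipartite subgraph on $S,T$ with edge set $E(S,T)$), and then use $2K_2$-freeness on the disjoint edges $ys_0$ and $st$ to force $t$ to be adjacent to all of $N(y)\cap S$ plus $s$, contradicting maximality. The only cosmetic differences are your explicit handling of the case $E(S,T)=\emptyset$ and the disjointness checks, which the paper leaves implicit.
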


\begin{proof}
Let $G'$ be the bipartite subgraph of $G$ with partite sets $S$ and $T$ and edge set $E_G(S,T)$.
Let $y\in T$ be a vertex of maximum degree in $G'$. Let $S'=N_G(y)\cap S$ and $T'=N_G(y)\cap T$.
We claim that $N_G(y)$ meets all edges in $E_G(S,T)$. Otherwise, let $e=uv\in E_G(S,T)$
be such that $u\in S$, $v\in T$ and $u,v\notin N_G(y)$. To get a contradiction we show that $d_{G'}(v)>d_{G'}(y)$.
Indeed, let $s\in S'$. Then $sy\in E_G(S,T)$. Consider the edges $uv$ and $sy$ in $G$. Since $S$ is independent, $u\not\sim s$. Since $u,v\notin N_G(y)$ and $G$ is $2K_2$-free, $v\sim s$ in $G$. It follows that $v$ is adjacent to every vertex in $S'$ and to the vertex $u\in S-S'$. Hence $d_{G'}(v)\ge|S'|+1>|S'|=d_{G'}(y)$. 
\end{proof}

We are now ready to prove Theorem~\ref{thm1}.

\begin{proof}[Proof of Theorem \ref{thm1}]
Let $G$ be a nonempty  $2K_2$-free graph and let $P=v_0v_1\cdots v_{\ell}$ be a longest path in $G$. Assume that $x\in V(G)$ is a vertex of maximum degree in $G$ which does not belong to $P$. Let $k=d(x)=\Delta(G)$. By Lemma~\ref{dom}, $N(x)\subseteq V(P)$. Let
$$S=\{v_0, v_{a+1}\mid v_a\in N(x)\}\subseteq V(P).$$
 By lemma~\ref{4}(3,4), $S$ is an independent set. Let $T=V(P)-S$. By Lemma~\ref{4}(1,2), $V(P)$ contains at least $2k+1$ vertices.
The set $\{v_av_{a+1}\mid v_a\in N(x)\}$ is a set of $k$ independent edges in $E(S,T)$ (i.e., $k$ edges which pairwise do not share an endpoint).

We claim that if $|V(P)|\ge 2k+2$ then there are $k+1$ independent edges in $E(S,T)$. Indeed, the $k$ neighbors of $x$ separate $P$ into $k+1$ non-trivial subpaths (see Lemma \ref{4}(1)). By the pigeonhole principle one of these subpaths contains at least two vertices in $V(P)-N(x)$. If $v_0$ is an endpoint of this subpath, then $v_0,v_1\notin N(x)$ and
$$\{v_0v_1,v_av_{a+1}\mid v_a\in N(x)\}\subseteq E(S,T)$$
is an independent subset of $k+1$ edges.
If $v_{\ell}$ is an endpoint of this subpath then
$$\{v_0v_1,v_{a+1}v_{a+2}\mid v_{a}\in N(x)\}\subseteq E(S,T)$$
is an independent subset of size $k+1$.
Thus, we can assume that the endpoints of this subpath are $v_\alpha,v_\beta\in N(x)$ for some
$\alpha<\beta$ in $\{1,\dots,\ell-1\}$. Then
$$\{v_0v_1\}\cup\{v_{a+1}v_{a+2}\mid a\le\alpha,\ v_a\in N(x) \}\cup\{ v_bv_{b+1}\mid b\ge\beta,\ v_b\in N(x)\}\subseteq E(S,T)$$
is a set of $k+1$ independent edges.

Now, by Lemma~\ref{bi}, there is a vertex $y\in T$ such that $N(y)$ meets all edges in $E(S,T)$. If $|V(P)|\ge 2k+2$, then $y$ has at least $k+1$ neighbors in $V(P)=S\cup T$ since $E(S,T)$ contains an independent set of $k+1$ edges. Then $d(y)\ge k+1>k=\Delta(G)$, a contradiction. If $|V(P)|=2k+1$ then $T=N(x)$. Indeed, the disjoint union $S\cup N(x)\subseteq V(P)$ and $|S|=k+1$, $|N(x)|=k$. Hence $N(x)=V(P)-S=T$. In particular, in that case, $y\in N(x)$. Since $N(y)$ meets all edges in $E(S,T)$ and $E(S,T)$ contains an independent set of $k$ edges, $y$ has at least $k$ neighbors in $V(P)$. Since $x$ is also a neighbor of $y$ we have $d(y)\ge k+1>k=\Delta(G)$, a contradiction.
\end{proof}

\textbf{{\noindent \large Acknowledgements}}

The authors would like to  thank  Guantao Chen  and Akira Saito
for their helpful discussions.

\bibliographystyle{amsplain} 	
\bibliography{longest}	 		 	 	 
\clearpage

%
%
%

\end{document}